\newcommand{\red}[1]{#1}
\let\leq\leqslant\let\geq\geqslant
\let\oldbullet\bullet
\renewcommand{\bullet}{{\scriptscriptstyle\oldbullet}}
\theoremstyle{plain}
\newtheorem*{Theorem}{Theorem}
\theoremstyle{remark}
\newtheorem*{Remarks}{Remarks}
\renewcommand\to{\mathchoice{\longrightarrow}{\rightarrow}{\rightarrow}{\rightarrow}}
\newcommand{\cD}{\mathscr D}
\newcommand{\OO}{\mathscr O}
\newcommand{\CC}{\mathbb C}
\newcommand{\HHH}{\mathscr H}
\newcommand{\rH}{{\scriptscriptstyle\mathrm{H}}}
\newcommand{\RR}{\mathbb R}
\newcommand{\QQ}{\mathbb Q}
\newcommand{\QH}{\QQ^\rH}
\newcommand{\QHn}{\QQ^\rH_n}
\newcommand{\QHm}{\QQ^\rH_m}
\newcommand{\PP}{\mathbb P}
\newcommand{\mm}{\mathfrak m}
\newcommand{\NN}{\mathbb N}
\newcommand{\ZZ}{\mathbb Z}
\newcommand{\DD}{\mathbb D}
\DeclareMathOperator{\CH}{H}
\DeclareMathOperator{\HH}{\mathcal H}
\DeclareMathOperator{\DR}{DR}
\DeclareMathOperator{\Ext}{Ext}
\DeclareMathOperator{\Gr}{Gr}
\DeclareMathOperator{\Ker}{Ker}
\DeclareMathOperator{\ICC}{IC}
\DeclareMathOperator{\MHM}{MHM}
\newcommand{\IC}{\ICC_X}
\newcommand{\cK}{\mathcal K}
\newcommand{\cQ}{\mathcal Q}
\let\oo 0
\let\xx x
\newcommand\hto{\mathrel{\lhook\joinrel\to}}
\def\p@enumii{}
\begin{document}
\title{Hodge--Lyubeznik numbers}


\author{Ricardo García López}
\address{Departament de Matemàtiques i Informàtica, Universitat de Barcelona. Gran Vía, 585, E-08007 Barcelona, Spain}
\email[R. García López]{ricardogarcia@ub.edu}

\thanks{The first author received partial support from grant PID2022-137283NB-C22 funded by MCIN/AEI/\allowbreak10.13039/501100011033}

\author{Claude Sabbah}
\address{CMLS, CNRS, École polytechnique, Institut Polytechnique de Paris, 91128 Palaiseau cedex, France}
\email[C. Sabbah]{Claude.Sabbah@polytechnique.edu}

\subjclass{13D45, 14B15, 32S35}

\keywords{Hodge numbers, Lyubeznik numbers, mixed Hodge modules}


\begin{abstract}
We define a Hodge-theoretical refinement of the Lyubeznik numbers for local rings of complex algebraic varieties. We prove that these numbers are independent of the choices made in their definition and that, for the local ring of an isolated singularity, they can be expressed in terms of the Hodge numbers of the cohomology of the link of the singularity. We give examples of isolated singularities with the same Lyubeznik numbers but different Hodge-Lyubeznik numbers. The present version takes into account the published erratum to the first version of this paper.
\end{abstract}


\maketitle

\section*{ Introduction}

Let $A$ be a local ring for which there exists a surjection $\pi: R \to A$, where $R$ is a regular local ring of dimension $n$ containing a field (for instance, $A$ is the local ring of an algebraic variety at a point). Put $I=\Ker\pi$, let $\mm$ be the maximal ideal of $R$, set $k= R/\mathfrak m$.

In \cite{L}, Lyubeznik proved that the numbers
\[
\lambda_{r,s}:= \dim_k\Ext^r_R(k, \CH_I^{n-s}(R))
\]
are finite and depend only on the ring $A$, they are independent of the presentation $A \cong R/I$ (finiteness was known if $R$ contains a field of positive characteristic). They are known as the \emph{Lyubeznik numbers of $A$}, and have been studied by many authors
(see the survey article \cite{NBWZ} and the references therein).

Let $X$ be complex algebraic variety, $\xx\in X$. Taking an affine chart,
we can assume that for some $n\geqslant 1$
we have a closed embedding
$
i: X \hto \CC^n
$
with $i(\xx)=\oo\in\CC^n$.

Set $A=\mathscr O_{X,x}$ and let $D_n$ denote the ring of differential operators on
$\CC^n$ with polynomial coefficients. The embedding $i$ gives a presentation $A\cong\mathscr O_{\CC^n, \oo}/ I$ and the Lyubeznik number $\lambda_{r,s}(A)$ equals the length of the holonomic $D_n$\nobreakdash-module $H^r_{\mathfrak m}(H^{n-s}_I( \mathscr O_{\CC^n, \oo}))$, where $\mathfrak m$ is the maximal ideal of $\mathscr O_{\CC^n, \oo}$.
Let $\cD_n$ denote the sheaf of algebraic linear differential operators on $\mathbb C^n$. Passing to sheaves, the $D_n$-module $H^r_{\mathfrak m}(H^{n-s}_I( \mathscr O_{\CC^n, \oo}))$ corresponds to a $\cD_n$-module $\HH^r_{\{\oo\}}(\HH^{n-s}_{i(X)}( \mathscr O_{\CC^n}))$ with punctual support, which is part of the datum defining a mixed Hodge module \cite{S1}.
Since the category of mixed Hodge modules with punctual support is equivalent to the category of mixed Hodge structures, we have a Hodge and a weight filtration on $\HH^r_{\{\oo\}}(\HH^{n-s}_{i(X)}( \mathscr O_{\CC^n}))$, and we can consider numerical invariants attached to them, say\footnote{The possibility of considering such invariants is mentioned in a remark after Corollary~2 in~\cite{RSW}.}
\begin{equation} \label{hn}
\lambda^{p,q}_{r,s}(\mathscr O_{X,x}):= \dim_{\CC} \Gr_F^p\Gr^W_{p+q} \HH^r_{\oo}\bigl(\HH^{n-s}_{i(X)}\bigl( \QHn[n](n)\bigr)\bigr).
\end{equation}
Here, $\QHn[n](n)$ is defined below, following Saito (see \cite[(4.5.5)]{S1}). It is a Hodge module with~$\mathscr O_{\CC^n}$ as underlying $\cD_n$-module, and it will follow from its definition that
\[
\lambda_{r,s}(\mathscr O_{X,x})=\sum_{p,q}\lambda^{p,q}_{r,s}(\mathscr O_{X,x}).
\]
So, for local rings of complex algebraic varieties, the numbers $\lambda^{p,q}_{r,s}(\mathscr O_{X,x})$ might be regarded as Hodge-theoretical refinements of the Lyubeznik numbers. We prove that these Hodge numbers are independent of the embedding $X \hto \CC^n$, and we give examples showing that they do give more information than the Lyubeznik numbers. We show that for isolated singularities they can be computed in terms of the Hodge numbers of the cohomology of the link of $X$ at $x$, extending the main result in \cite{GS}.

\subsection*{Notations}

We refer to \cite{HTT} for terminology and notations concerning $\cD$-modules and to \cite{S1} and~\cite{S2} for those concerning mixed Hodge modules, see also the introduction to this theory in~\cite{Sch} and also \cite[Chapter 14]{PS}.\footnote{In \cite{S1},\cite{S2}, \cite{Sch} \emph{right} $\cD$-modules are used, while we will use \emph{left} ones.} Still, we briefly
recall a few notations we will use:
\bgroup
\renewcommand{\theenumi}{\roman{enumi}}
\begin{enumerate}
\item\label{nota:i} All varieties considered in this note are defined over the complex numbers. For $n\geqslant 1$, we denote by $\cD_n$ the sheaf of algebraic linear differential operators on $\CC^n$. Unless otherwise specified, $\cD_n$-modules are always left modules.
A mixed Hodge module on a smooth variety $Y$ is a 4-tuple\enlargethispage{\baselineskip}
\[
\mathcal M=(M, F_{\bullet}, K, W_{\bullet}),
\]
where $M$ is a holonomic left $\cD_Y$-module
with regular singularities, $K$ is a rational perverse sheaf on $Y$ together with an isomorphism $\DR(M) \cong K \otimes_{\mathbb Q} \CC$ (omitted in the notation, but part of the data), $F_{\bullet}$ is a good filtration of $M$ by
$\mathscr O_X$-coherent subsheaves and $W_{\bullet}$ is a finite increasing filtration of $K$, inducing one on $M$.
These data has to verify a number of compatibilities and conditions, see \cite[2.17]{S1} or \cite{Sch}.

\item\label{nota:ii} We use the abbreviations MHS for ``mixed Hodge structure'' and MHM for ``mixed Hodge module''. We denote by $\MHM(X)$ the abelian category of mixed Hodge modules on an algebraic variety $X$ \cite[\S4]{S1}. We~will have to consider MHMs on singular varieties embedded in affine spaces. Recall \cite[(2.17.5)]{S1} that if $X$ is an algebraic variety with a closed immersion $X \hto\nobreak Y$, where $Y$ is smooth, then the category $\MHM(X)$ can be identified with the subcategory $\MHM_X(Y)$ of MHMs on $Y$ with support on $X$. If $X$ is a point, then there is an equivalence of categories between $\MHM_X(Y)$ and the category of mixed Hodge structures.

If $\mathcal A$ is an abelian category, we denote by $D^b(\mathcal A)$ its bounded derived category. The symbol $\sim$ denotes an isomorphism in $D^b(\mathcal A)$.
The category $D^b(\MHM(Y))$ is endowed with a six functor formalism (\cite[\S 4]{S1}).
If $X\subset Y$ is a subvariety, we denote $D_X^b(\MHM(Y))$ the full subcategory of $D^b(\MHM(Y))$ which has as objects those complexes whose cohomology is supported on $X$.

\item\label{nota:iii} Following Saito \cite{S1}, we denote by $\QH_{\textup{pt}}$ the constant Hodge module $(\CC, F, \QQ, W)$ on $\{\textup{pt}\}$ with $\Gr^i_F=Gr^W_i=0$ if $i\neq 0$ . If $\alpha_X: X \to \{\textup{pt}\}$ is the constant map, put $\QH_X=\alpha_X^{\ast}\QH_{\textup{pt}}\in D^b(\MHM(X))$. If $Y$ is smooth of dimension $d_Y$, then $\QH_Y[d_Y]$ is concentrated in degree zero and $\QH_Y[d_Y]_0$ is, by abuse of notation, denoted also
\begin{align*}
\QH_Y[d_Y] = (\OO_Y, F_{\bullet}, \QQ_Y[d_Y], W_{\bullet}),\qquad \text{where}\quad
\begin{cases}
\Gr_F^{k}\OO_Y=\{0\} &\text{if }
k\neq 0, \\
\Gr^W_k\OO_Y=\{0\} &\text{if }
k\neq d_Y.
\end{cases}
\end{align*}
For $Y=\CC^n$, we put $\QHn:= \QH_Y$.

\item\label{nota:iv} If $\mathcal M=( M,F_{\bullet}, K, W_{\bullet})$ is a mixed Hodge module and $k \in\mathbb Z$,
then its Tate twist by $k$ is the MHM
\[
\mathcal M(k):=(M, F_{\bullet-k}, (2\pi i) ^k K, W_{\bullet-2k}),
\]
see \cite[2.17]{S1}.

\item\label{nota:v} We will consider the duality functor $\DD$ in the category of mixed Hodge modules as defined by Saito in \cite[2.6]{S1}, see also \cite[2.4.3]{S0}. On the underlying regular holonomic $\cD$-modules, the duality $\DD$ is the usual holonomic duality. For $\QHn[n](n)$ we have
\begin{equation} \label{equa}
\mathbb D \left(\QHn[n](n)\right) \sim \QHn[n].
\end{equation}

\item\label{nota:vi} Let $X$ be an algebraic variety, $Z\stackrel{k}\hto X$ a closed subvariety.
Deligne defines in \cite[(8.3.8)]{Del} a mixed Hodge structure on the (topological) local cohomology $\CH^\ell_Z(X,\QQ)$, $\ell\geqslant 0$. If $\alpha_X : X \to\nobreak \{\textup{pt}\}$, $\alpha_Z: Z \to \{\textup{pt}\}$ are the constant maps then, by a theorem of Saito (see \cite{S2}), we have isomorphisms of MHS
\[
\CH^\ell_Z(X,\QQ)^{\mathrm{Del}} \cong \CH^\ell\bigl(\textup{pt}, (\alpha_X)_{\ast}k_{\ast}k^{!}\alpha_X^{\ast}\QQ\bigr) \cong \CH^\ell\bigl(\textup{pt}, (\alpha_Z)_{\ast}k^{!}\alpha_X^{\ast}\QH_{\textup{pt}}\bigr),
\]
where on the left hand side the MHS is the one of Deligne. For $\mathcal M$ a mixed Hodge module on~$X$, we put
\[
\RR \Gamma_{Z} (\mathcal M) := (\alpha_X)_{\ast}k_{\ast}k^{!}\mathcal M.
\]

\item\label{nota:vii} If $X \hto Y$ is a subvariety of a smooth variety $Y$ then $\mathcal H^{\bullet}_X(\OO_Y)$, the local algebraic cohomology modules of $\OO_{Y}$ supported on $X$, regarded as left $\cD_Y$-modules, are part of the data defining the mixed Hodge modules $\mathcal H^{\bullet}(\RR \Gamma_{X} (\QHn[n](k)))$, for all $k\in\ \mathbb Z$, see \cite{S1}.

\item\label{nota:viii} If $\mathcal M$ is a Hodge module with punctual support (equivalently, a MHS), we put
\[
h^{p,q}(\mathcal M) = \dim_{\CC} \, ( \Gr_F^pGr^W_{p+q} \mathcal M).
\]
\end{enumerate}
\egroup

\section*{Hodge-Lyubeznik numbers}

Let $A$ be a finitely generated $\CC$-algebra, that is, a quotient of a polynomial ring with coefficients in~ $\CC$. Then $A$ is the ring of regular functions of an affine algebraic variety $X$, and any presentation of $A$ as a quotient of $\CC[x_1,\dots, x_n]$ corresponds to a closed embedding $i: X\hto \CC^n$. For a given $x\in X$, we~can assume $i(x)=\oo$. We denote by $\mathscr O_{X,x}$ the local ring of $X$ at $x\in X$.

\begin{Theorem}
Let $X$ be an affine algebraic variety, $\xx\in X$. Choose a closed embedding
$
i: X \hto \CC^n
$
with $n\geqslant 1$, $i(\xx)=\oo\in\CC^n$. Then
\bgroup
\renewcommand{\theenumi}{\alph{enumi}}
\begin{enumerate}
\item\label{enum:a}
The numbers
\begin{equation*}
\lambda^{p,q}_{r,s}(\mathscr O_{X,x}):= \dim_{\CC} \Gr_F^pGr^W_{p+q} \HH^r_{\oo}\bigl(\HH^{n-s}_{i(X)}\bigl( \QHn[n](n)\bigl)\bigr),
\end{equation*}
where $p,q\in\ZZ$, are independent of the embedding $i$.
\item\label{enum:b}
Assume the singularity of $X$ at $\xx$ is isolated and $X$ is of pure dimension $d\geqslant 2$ at $\xx$. Denote by $H_{\{x\}}^i(X, \CC)$ the singular cohomology groups of $X$ with complex coefficients and support on $\{x\}$, endowed with their mixed Hodge structure (see (vi) above).
Then for $p,q\in\ZZ$, one has:
\renewcommand{\theenumii}{\roman{enumii}}
\begin{enumerate}
\item\label{enum:bi}
$\lambda^{p,q}_{0,s}(\mathscr O_{X,x})=h^{-p,-q}(H_{\{x\}}^s(X, \CC))$
for $1 \leq s \leq d-1$,
\item\label{enum:bii}
$\lambda^{p,q}_{r,d}(\mathscr O_{X,x})= h^{p+d, q+d}(H_{\{x\}}^{d+r}(X, \CC))$
for $2 \leq r \leq d$,
\item\label{enum:biii}
all other $\lambda^{p,q}_{r,s}(\mathscr O_{X,x})$ vanish.
\end{enumerate}
\end{enumerate}
\egroup
\end{Theorem}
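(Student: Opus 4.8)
The plan is to push the whole computation from $\CC^n$ down to $X$ using Saito's identification $\MHM(X)\simeq\MHM_{i(X)}(\CC^n)$, and then to dissect the dualizing complex of $X$. Write $\alpha_X\colon X\to\{\textup{pt}\}$ for the constant map, $\omega_X^H:=\alpha_X^{!}\QH_{\textup{pt}}\in D^b(\MHM(X))$, $k\colon\{x\}\hto X$ and $j\colon X\setminus\{x\}\hto X$ for the inclusions, $\mathcal N_s:=\HH^{-s}(\omega_X^H)$, and (following (vi) above) $\HH^r_{\{x\}}(\,\cdot\,):=\HH^r\bigl((\alpha_X)_*k_*k^{!}(\,\cdot\,)\bigr)$, a mixed Hodge structure. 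Since $\CC^n$ is smooth of dimension $n$ one has $\QHn[n](n)=\alpha_{\CC^n}^{!}\QH_{\textup{pt}}[-n]$, hence $i^{!}\QHn[n](n)=\omega_X^H[-n]$; under the Kashiwara equivalence, realized by the closed pushforward $i_*$, the object $\RR\Gamma_{i(X)}(\QHn[n](n))=i_*i^{!}\QHn[n](n)$ corresponds to $\omega_X^H[-n]$, the functor $\HH^{n-s}_{i(X)}$ corresponds to taking $\mathcal N_s$, and $\HH^r_{\oo}$ to $\HH^r_{\{x\}}$ on $X$ — all of this compatibly with mixed Hodge structures, by Saito's comparison theorem (see (vii) above) and by the compatibility of the duality functor and of the six operations with mixed Hodge modules. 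Consequently
\[
\lambda^{p,q}_{r,s}(\OO_{X,x})=h^{p,q}\bigl(\HH^r_{\{x\}}(\mathcal N_s)\bigr),
\]
an expression that makes no mention of the embedding $i$; this proves the independence statement.

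For the isolated singularity I would first locate $\omega_X^H$ in the perverse $t$-structure. One always has $\QH_X\in{}^{p}D^{[0,d]}(\MHM(X))$, so $\omega_X^H=\DD_X\QH_X\in{}^{p}D^{[-d,0]}$ and thus $\mathcal N_s=0$ for $s\notin[0,d]$; and $j^{*}\omega_X^H=\omega_{X\setminus\{x\}}^H=\QH_{X\setminus\{x\}}[2d](d)$ sits in perverse degree $-d$ because $X\setminus\{x\}$ is smooth of pure dimension $d$. Hence, for $0\le s\le d-1$ the module $\mathcal N_s$ is supported at $x$, so $\HH^r_{\{x\}}(\mathcal N_s)=0$ for $r\ne0$, while $\HH^0_{\{x\}}(\mathcal N_s)$ is just $\mathcal N_s$, regarded as a mixed Hodge structure. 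To recognise it, apply $\HH^{-s}$ to the recollement triangle $j_!j^{*}\omega_X^H\to\omega_X^H\to k_*k^{*}\omega_X^H\xrightarrow{+1}$: as $j_!$ is right $t$-exact, $j_!j^{*}\omega_X^H\in{}^{p}D^{\le-d}$, so for $s\le d-1$ the natural map $\mathcal N_s\to k_*\HH^{-s}(k^{*}\omega_X^H)$ is an isomorphism; and $k^{*}\omega_X^H=k^{*}\DD_X\QH_X=\DD_{\textup{pt}}k^{!}\QH_X$, together with $\HH^s(k^{!}\QH_X)=H^s_{\{x\}}(X,\CC)$ (by (vi) above), gives $\HH^{-s}(k^{*}\omega_X^H)\cong\bigl(H^s_{\{x\}}(X,\CC)\bigr)^{\vee}$. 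As $H^0_{\{x\}}(X,\CC)=0$ (here $d\ge2$), this yields $\mathcal N_0=0$, the vanishing of $\lambda^{p,q}_{r,s}$ for $r\ne0$ and $0\le s\le d-1$, and $\lambda^{p,q}_{0,s}=h^{p,q}\bigl((H^s_{\{x\}}(X,\CC))^{\vee}\bigr)=h^{-p,-q}\bigl(H^s_{\{x\}}(X,\CC)\bigr)$ for $1\le s\le d-1$.

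It remains to treat $s=d$, where $\mathcal N_d$ is no longer supported at $x$. Apply $k_*k^{!}$ to the truncation triangle $\mathcal N_d[d]\to\omega_X^H\to\mathcal C\xrightarrow{+1}$, with $\mathcal C:={}^{p}\tau_{\ge-d+1}\omega_X^H$ supported at $x$ and, since $\mathcal N_0=0$, sitting in cohomological degrees $[-d+1,-1]$; one gets $k_*k^{!}\mathcal N_d[d]\to k_*k^{!}\omega_X^H\to\mathcal C\xrightarrow{+1}$. Now $k^{!}\omega_X^H=k^{!}\DD_X\QH_X=\DD_{\textup{pt}}k^{*}\QH_X=\QH_{\textup{pt}}$ lies in degree $0$, strictly above the support of $\mathcal C$ (this uses $d\ge2$), so the connecting map vanishes and $k_*k^{!}\mathcal N_d[d]\simeq k_*\QH_{\textup{pt}}\oplus\mathcal C[-1]$. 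Taking $\HH^r$, pushing to the point, and reusing $\HH^{-s}(\mathcal C)=\mathcal N_s\cong(H^s_{\{x\}}(X,\CC))^{\vee}$ for $1\le s\le d-1$, one finds $\HH^r_{\{x\}}(\mathcal N_d)=0$ for $r\notin[2,d]$ — so $\lambda^{p,q}_{0,d}=\lambda^{p,q}_{1,d}=0$, which with the previous paragraph finishes the vanishing statement — and, for $2\le r\le d$,
\[
\lambda^{p,q}_{r,d}=\delta_{r,d}\,h^{p,q}(\QH_{\textup{pt}})+h^{-p,-q}\bigl(H^{d+1-r}_{\{x\}}(X,\CC)\bigr).
\]
Finally, Poincaré duality on the link $L$ of the singularity — a compact oriented $(2d-1)$-manifold onto which a punctured neighbourhood of $x$ retracts, with $H^m_{\{x\}}(X,\CC)\cong\widetilde H^{m-1}(L,\CC)$ — promoted, through Verdier duality in $\MHM$, to an isomorphism of mixed Hodge structures $\bigl(H^m_{\{x\}}(X,\CC)\bigr)^{\vee}\cong H^{2d+1-m}_{\{x\}}(X,\CC)(d)$ for $2\le m\le d$, together with its degree‑zero version $\QH_{\textup{pt}}\oplus(H^1_{\{x\}}(X,\CC))^{\vee}\cong H^{2d}_{\{x\}}(X,\CC)(d)$ (reflecting the splitting $H^0(L,\CC)=\widetilde H^0(L,\CC)\oplus\CC$), converts the last display into $\lambda^{p,q}_{r,d}=h^{p,q}\bigl(H^{d+r}_{\{x\}}(X,\CC)(d)\bigr)=h^{p+d,q+d}\bigl(H^{d+r}_{\{x\}}(X,\CC)\bigr)$ for $2\le r\le d$, as asserted.

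In this scheme the handling of the $t$-structure is routine; the substance lies in checking that every isomorphism produced along the way is one of mixed Hodge structures — which rests on Saito's comparison theorem and on the compatibility of $\DD$, $k^{*}$, $k^{!}$, $j_!$ and pushforward with mixed Hodge modules — and, above all, in realizing the link's Poincaré duality in the last step as an isomorphism of mixed Hodge structures carrying the precise Tate twist $(d)$, with due care for the reduced‑versus‑unreduced discrepancy in degree zero. I expect this last point to be the main obstacle, and it is exactly there that the hypothesis $d\ge2$ enters, ensuring that the rank‑one summand $\QH_{\textup{pt}}$ produced above lands within the range $2\le r\le d$ covered by the second formula.
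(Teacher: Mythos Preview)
Your argument is correct and follows a genuinely different route from the paper's.

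For \eqref{enum:a}, the paper imitates Lyubeznik's original proof: it first compares two embeddings related by a closed immersion $\kappa:\CC^n\hto\CC^m$ via base-change identities, and then reduces the general case to this one by invoking Srinivas's theorem that any two affine embeddings become equivalent after composing with linear inclusions into a common $\CC^N$. You instead identify the invariant with something intrinsic to $X$, namely $h^{p,q}\bigl(\HH^r_{\{x\}}(\HH^{-s}\omega_X^H)\bigr)$, via $i^{!}\QHn[n](n)=\omega_X^H[-n]$ and Kashiwara's equivalence. Your route is shorter and more conceptual; it trades the concrete comparison of embeddings (and the appeal to \cite{Sr}) for the well-definedness of $\MHM(X)$ on a singular $X$, which the paper in any case assumes from Saito.

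For \eqref{enum:b}, the paper transports the argument of \cite{GS} into the MHM setting: it establishes the key identity $h^{p,q}\bigl(\HH^r_{\{0\}}\DD\HH^{-s}_X(\QHn[n])\bigr)=h^{p+n,q+n}(H^{r+s+n}_{\{x\}}(X))$ by a chain of adjunctions, and for $s=d$ uses the self-duality of Saito's $\ICC_X$ (and the vanishing $\lambda_{0,d}=0$) to pass from $\DD\HH^{n-d}_X$ to $\HH^{n-d}_X$. You instead dissect $\omega_X^H$ directly via perverse truncation and recollement, obtaining first $\lambda^{p,q}_{r,d}=\delta_{r,d}\,h^{p,q}(\QH_{\textup{pt}})+h^{-p,-q}(H^{d+1-r}_{\{x\}}(X))$, and then convert this to the stated formula using Poincaré duality on the link as an MHS isomorphism. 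The paper's approach never needs the link duality as an input (it is derived afterwards, in Remark~(2), as a \emph{consequence} of \eqref{enum:bi}--\eqref{enum:bii}); conversely, your approach avoids the $\ICC$ sheaf and the spectral-sequence degeneration from \cite{GS}. Your worry about the last step is well placed but surmountable: the identity $\DD(k^{*}Rj_{*}\QH_U)\simeq k^{!}j_{!}\omega_U^H=(k^{*}Rj_{*}\QH_U)[2d-1](d)$, obtained from $k^{!}j_{!}\QH_U\simeq (k^{*}Rj_{*}\QH_U)[-1]$ and $\omega_U^H=\QH_U[2d](d)$, gives exactly $(H^m_{\{x\}}(X))^{\vee}\cong H^{2d+1-m}_{\{x\}}(X)(d)$ with the correct twist, including the degree-zero case you need for $r=d$. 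One small point: you should state explicitly (as the paper does) that one first shrinks $X$ so that $x$ is its unique singular point; your formula is local at $x$, so this is harmless.
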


\begin{proof}
\eqref{enum:a} Following \cite{L}, we first prove a special case: Assume we have an algebraic embedding $\kappa: \CC^n \hto
\CC^m$ with $\kappa(\oo_n)=\oo_m$,
set $j=\kappa \circ i$, we will compare the embeddings given by $i$ and $j$.

We have $\QHn[n](n)\sim\kappa^{!}\,\QH_m[m](m)[m-n]$, so
\begin{align*}
\kappa_{\ast}\,\RR\Gamma_{i(X)}\bigl(\QHn [n](n)\bigr) & \sim \kappa_{\ast}\, i_{\ast}\,i^{!} \,\kappa^{!}\,\QHm [m] (m) [m-n]
\\ &\sim \RR\Gamma_{j(X)}\QHm[m] (m) [m-n],
\end{align*}
Since $\kappa$ is affine, $\kappa_{\ast}$ is the right derived functor of the cohomological
functor $\HHH^0\kappa_{\ast}$, which is exact because $\kappa$ is a closed embedding.
It follows that
\begin{align*}
\HH^{m-r}\bigl(\RR\Gamma_{j(X)}\bigl(\QHm [m](m)\bigr)\bigr) & \cong \HH^{m-r}\bigl(\kappa_{\ast}\,\RR\Gamma_{i(X)}\bigl(\QHn [n](n)\bigr)[n-m]\bigr) \\
& \cong \kappa_{\ast} \HH^{n-r}\bigl(\RR\Gamma_{i(X)}\bigl(\QHn [n](n)\bigr)\bigr),
\end{align*}
see also \cite[Remark 2.5]{MP}.

Let $a:\{\oo\} \hto \CC^n$ and $b=\kappa\circ a$.
Then
\begin{align*}
\RR\Gamma_{\{\oo\}}\Bigl(\HH^{m-r}\bigl(\RR\Gamma_{j(X)}\bigl(\QHm [m](m)\bigr)\bigr)\Bigr) & \sim
b_{\ast}\,b^!\Bigl(\HH^{m-r}\bigl(\RR\Gamma_{j(X)}\bigl(\QHm [m](m)\bigr)\bigr)\Bigr) \\
&\sim \kappa_{\ast}\,a_{\ast}\,a^!\,\kappa^!\,\kappa_{\ast} \HH^{n-r}\bigl(\RR\Gamma_{i(X)}\bigl(\QHn [n](n)\bigr)\bigr) \\
&\sim \kappa_{\ast}\,\RR\Gamma_{\{\oo\}}\bigl(\HH^{n-r}\bigl(\RR\Gamma_{i(X)}(\QHn [n](n)\bigr)\bigr),
\end{align*}
since $\kappa^{!}\kappa_{\ast}\sim \mathrm{id}$. Thus, for any $e\geqslant 0$,
\begin{align*} \label{equality}
\HH^e\Bigl(\RR\Gamma_{\{\oo\}}\bigl(\HH^{m-r}\bigl(\RR\Gamma_{j(X)}\bigl(\QHm [m](m)\bigr)\bigr)\bigr)\Bigr)
& \cong \HH^e\Bigl(\kappa_{\ast}\RR\Gamma_{\{\oo\}}\bigl(\HH^{n-r}\bigl(\RR\Gamma_{i(X)}\bigl(\QHn [n](n)\bigr)\bigr)\bigr)\Bigr) \\
& \cong \kappa_{\ast}\HH^e\Bigl(\RR\Gamma_{\{\oo\}}\bigl(\HH^{n-r}\bigl(\RR\Gamma_{i(X)}\bigl(\QHn [n](n)\bigr)\bigr)\bigr)\Bigr).
\end{align*}

If $\mathcal M$ is a Hodge module on $\mathbb C^n$ with punctual support in $\{\oo\}$
then, since $\MHM_{\{\oo\}}(\CC^n)\simeq\MHM(\{\oo\})\simeq\MHM_{\{\oo\}}(\CC^m)$,
\[
h^{p,q}(\mathcal M) = h^{p,q}(\kappa_{\ast}\mathcal M),
\]
and so the desired equality is proved in this special case. In general, for any $\ell \leqslant N\in\NN$ denote by $a_{\ell,N}: \CC^\ell \hto
\CC^{N}$ the linear embedding defined by $(x_1\dots,x_\ell) \to (x_1, \dots, x_{\ell}, 0,\dots, 0)$. Given embeddings $i: X \hto \CC^n$ and
$j: X \hto \CC^m$, by \cite[Theorem 2]{Sr}
there is an $N> \max\{n,m\}$ and a polynomial automorphism $\varphi: \CC^N \to \CC^N$ such that
$a_{n,N}\circ i= \varphi\circ a_{m,N}\circ j : X \hto \CC^N$. Then, applying the previous argument twice, the result follows.\footnote{It might be that to invoke \cite{Sr} is overkill, it is likely that one could also work with polydisks instead of affine spaces, etc. We prefer to stay within an algebraic framework.}

\eqref{enum:b} The proof is a translation into the MHM language of the proof given in~\cite{GS}. Some arguments that produce numerical equalities in loc.\ cit.\ have to be promoted to isomorphisms of MHM.

First, it is proved in \cite[pg. 321]{GS} that if we consider an embedding $i: X\hto \CC^n$, with $i(\xx)=\oo$, one has an isomorphism of $\cD_n$-modules
\[
\HH_{\{\oo\}}^p\left(\left(\HH_{X}^{-q}\left(\OO_{\CC^n}\right)\right)^\ast\right)\cong\HH^{p+q} \Bigl(\RR \Gamma_{\{\oo\}} \bigl(\RR \Gamma_{X}(\OO_{\CC^n})\bigr)^\ast\Bigr),
\]
where $\ast$ denotes duality for holonomic $\cD$-modules. The same argument as in loc.\ cit.\ gives an isomorphism in the category of mixed Hodge modules, i.e., we have
\begin{equation} \label{equ}
\HH_{\{\oo\}}^r\left(\DD \left(\HH_{X}^{-s}\left( \QHn[n] \right)\right)\right)\cong\HH^{r+s}\left(\mathrm{R}\Gamma_{\{\oo\}}\left( \DD \left(\mathrm{R} \Gamma_{X}\left( \QHn[n]\right)\right)\right)\right).
\end{equation}
If we denote $a:\{\oo\} \hto \CC^n$, $i: X \hto \CC^n$, $k: \{x\} \hto X$ the inclusion maps, then
\begin{align*}
\RR\Gamma_{\{0\}} \DD \left( \RR\Gamma_{X}(\QHn[n])\right)& \sim \RR\Gamma_{\{0\}} \DD i_{\ast}i^{!}\QHn[n] && \text{by definition of $\RR\Gamma_X$} \\
& \sim \RR\Gamma_{\{0\}} i_{\ast} \DD i^{!}\QHn[n] && \text{by properness of $i$} \\
& \sim \RR\Gamma_{\{0\}} i_{\ast} i^{\ast}\QHn[n](n) && \text{by $\DD i^{!}=i^{\ast}\DD$ and (\ref{equa})}\\
& \sim a_{\ast}a^{!} i_{\ast} i^{\ast}\QHn[n](n) && \text{by definition of $\RR\Gamma_{\{0\}}$} \\
& \sim a_{\ast}a^{!}i_{\ast}\QH_X[n](n) && \text{by definition of $\QH_X$}\\
& \sim a_{\ast}k^{!} \QH_X[n](n)&& \text{by \cite[(4.4.3)]{S1}}.
\end{align*}
It follows from these quasi-isomorphisms and (\ref{equ}) that
\begin{equation}\label{eq:hpqHDH}
\begin{aligned}
h^{p,q}\left(\HH_{\{\oo\}}^r\left(\DD \left(\HH_{X}^{-s}\left( \QHn[n] \right)\right)\right) \right) & = h^{p,q} \left(\HH^{r+s} \left(\RR\Gamma_{\{0\}} \DD \left( \RR\Gamma_{X}(\QHn[n])\right) \right)\right) \\
&= h^{p,q} \left( \HH^{r+s} \left(a_{\ast}k^{!} \QH_X[n](n) \right) \right) \\
&= h^{p,q} \left( \HH^{r+s} \left(k^{!} \QH_X[n](n) \right) \right) \\
&= h^{p+n,q+n}\bigl( \HH^{r+s+n}(k^{!} \QH_X)\bigr).
\end{aligned}
\end{equation}
By \eqref{nota:vi} above, we have $\HH^{r+s+n}(k^{!} \QH_X) = H_{\{x\}}^{r+s+n}(X,\CC)$, and so
\begin{equation}\label{eq:HDH}
h^{p,q}\Bigl(\HH_{\{\oo\}}^r\left(\DD \left(\HH_{X}^{-s}\left( \QHn[n] \right)\right)\right) \Bigr) = h^{p+n,q+n}(H_{\{x\}}^{r+s+n}(X,\CC)).
\end{equation}
Since the singularity of $X$ at $x$ is isolated, if $r=0$ and $s=n-i$ with $i<d$, we have
\begin{align*}
h^{p,q}\Bigl(\HH_{\{\oo\}}^{0}\left(\DD \left(\HH_{X}^{-s}\left( \QHn[n]\right)\right)\right) \Bigr)
& = h^{p,q}\left(\DD \left(\HH_{X}^{-s}\left( \QHn[n] \right)\right)\right) \\
& = h^{-p,-q}\left(\HH_{X}^{-s}(\QHn[n])\right) \\
& = h^{-p,-q}\left(\HH^0_{\{ \oo\}}\left(\HH_{X}^{-s}(\QHn[n])\right)\right) \\
& = h^{-n-p,-n-q}\left(\HH^0_{\{ \oo\}}\left(\HH_{X}^{-s}(\QHn[n](n))\right)\right),
\end{align*}
and so,
\begin{align*}
\lambda^{p,q}_{0,i}(\mathscr O_{X,x}) & = h^{p, q}\Bigl(\HH^0_{\{ \oo\}}\bigl(\HH_{X}^{n-i}(\QHn[n](n))\bigr)\Bigr) \\
& = h^{-n-p, -n-q}\Bigl(\HH_{\{\oo\}}^{0}\bigl(\DD \bigl(\HH_{X}^{n-i}( \QHn[n])\bigr)\bigr) \Bigr) =
h^{-p,-q}( H^{i}_{\{x\}}(X, \CC))
\end{align*}
for $i<d$.

For the proof of \eqref{enum:b}\eqref{enum:bii}, we follow \cite{GS} again: Let $\mathrm{Sing}(X)$ denote the singular locus of $X$; since the singularity at $x$ is isolated, there exists a hypersurface~$H$ of $\CC^n$ containing $\mathrm{Sing}(X)\smallsetminus\{x\}$ and such that $x\not\in H$. Replacing~$X$ by $X\smallsetminus H\cap X$, we can assume that $U=X\smallsetminus\{x\}$ is smooth.

In \cite[4.5]{S1}, Saito introduces a Hodge module $\ICC^{\mathrm{Sai}}_X$,\footnote{In \cite{S1}, Saito considers right $\cD$-modules, we consider its left $\cD$-module counterpart.} which is the only object of $\MHM(\CC^n)$ such that its restriction to $U$ is $\QH_U[d]$ and which has no subobject and no quotient object in $\MHM(\CC^n)$ supported at $\oo\in\CC^n$. Its underlying holonomic $\cD_n$-module is the one introduced in \cite[Proposition 8.5 and its proof]{BK}, denoted there by $\mathfrak L (X,\CC^n)$.
Put
\begin{align}\label{eq:K}
\cK & = \HH^0_{\{\oo\}}\bigl(\DD\bigl(\HH^{n-d}_X (\QHn[n])\bigr)\bigr), \\
\IC &= \mathrm{coker} \bigl[\cK\to\DD\bigl(\HH_X^{n-d}(\QHn[n])\bigr) \bigr].\notag
\end{align}
The Hodge module $\IC$ has also no subobjet and (by duality, since $\lambda_{0,d}=0$, see \cite{GS}) no quotient object supported at $\oo$. Its restriction to $U$ is $\QH_U[d](\red{n})$. By uniqueness of Saito's Hodge module, we have $\IC= \ICC^{\mathrm{Sai}}_{X}(n)$.

Since $\cK $ is supported at $\{0\}$, the exact sequence
\[
0 \to \cK \to \DD\bigl(\HH_X^{n-d}(\QHn[n])\bigr) \to \IC \to 0
\]
yields, for $r\geqslant 1$, isomorphisms of MHMs
\[
\HH^r_{\{0\}}\bigl(\DD\bigl(\HH^{n-d}_X (\QHn[n])\bigr)\bigr) \cong \HH^r_{\{0\}}(\IC).
\]

By \cite[4.5.13]{S1}, we have $\DD \IC \cong \IC (d-2n)$, so we also have an exact sequence of MHMs
\begin{equation}\label{eq:exseq}
0 \to \IC (d-2n) \to \HH_X^{n-d}(\QHn[n]) \to \DD \,\cK \to 0,
\end{equation}
and so, for $r\geqslant 2$,
\[
\HH^r_{\{0\}} \IC (d-2n) \cong \HH^r_{\{0\}}\bigl(\HH_X^{n-d}(\QHn[n])\bigr).
\]
It follows that, for $r\geqslant 2$, setting $s=d-n$ in \eqref{eq:hpqHDH},
\begin{align*}
\lambda^{p,q}_{r,d}(\OO_{X,x})& = h^{p,q} \Bigl(\HH^r_{\{0\}}\bigl(\HH_X^{n-d}\bigl(\QHn[n](n)\bigr)\bigr)\Bigr) = h^{p,q} \bigl(\HH^r_{\{0\}} (\IC) (d-n)\bigr) \\
& = h^{p,q}\Bigl(\HH^r_{\{0\}}\bigl(\DD\bigl(\HH^{n-d}_X (\QHn[n])\bigr)\bigr)(d-n)\Bigr) \\
& = h^{p+d-n,q+d-n}\Bigl(\HH^r_{\{0\}}\bigl(\DD\bigl(\HH^{n-d}_X (\QHn[n])\bigr)\bigr)\Bigr) \\
&= h^{p+d,q+d}\bigl(H_{\{x\}}^{d+r}(X,\CC)\bigr).\qedhere
\end{align*}
\end{proof}

\begin{Remarks}\mbox{}
\begin{enumerate}
\item The spectral sequence
\begin{equation*}
E_2^{i,j}=\HH_{\{0\}}^i ( \HH_X^{j}(\OO_{\CC^n})) \Longrightarrow \HH_{\{0\}}^{i+j}(\OO_{\CC^n})
\end{equation*}
is a spectral sequence of MHMs with punctual support. Considering the
Hodge-Euler characteristics
\[
\chi^{p,q}(E_r)= \sum_{i,j} (-1)^{i+j}h^{p,q}(E^{i,j}_r),
\]
and since $\chi^{p,q}(E_r)= \chi^{p,q}(E_{r+1})$, we get
\begin{align*}
\sum_{i,j}(-1)^{i+j}\lambda^{p,q}_{i,n-j}(\OO_{X,x}) & = 0 \quad \text{ if }\ (p,q)\neq (0,0), \\
\sum_{i,j}(-1)^{i+j}\lambda^{0,0}_{i,n-j}(\OO_{X,x}) & = 1\,.
\end{align*}

\item When the singularity of $X$ at $x$ is isolated, the link is a compact manifold of real dimension $2d-1$, and its cohomology is endowed with a natural MHS. From Poincaré duality for the cohomology of the link it follows that, in this case,
\[
\lambda^{p,q}_{0,i} (\OO_{X,x}) = \lambda^{p,q}_{d-i+1, d} (\OO_{X,x})
\]
for $2\leqslant i\leqslant d-1$, $p,q\in\ZZ$.

\item Let $Y \hto \PP^N$ be a projective scheme such that $^{p\!}\HH^j(\QQ_Y)=0$ for $j\neq d$ where $d\in \NN\smallsetminus\{0\}$ and $^{p\!}\HH$ denotes perverse cohomology (for example, this holds if $Y^{an}$ is a $\QQ$-homology manifold of pure dimension). If $C$ is the affine cone over $Y$, it is not difficult to transpose the proof of \cite[1.6--1.8]{RSW} to our setting,\footnote{For the Thom-Gysin sequence, this is remarked at the end of Section 1.3 in \cite{RSW}.} and so it follows that the Hodge-Lyubeznik numbers $ \lambda^{p,q}_{i,j} (\OO_{C,0})$ corresponding to the local ring of $C$ at its vertex are independent of the embedding of $Y$ in projective space.

\item If $k$ is a field of characteristic $p>0$ and $X$ is a $k$-variety with an isolated singularity at $x\in X$, Blickle and Bondu proved in \cite[Theorem 1.1]{BB} that the Lyubeznik numbers of the local ring $\mathscr O_{X,x}$ can be computed in terms of the dimensions over $\ZZ/p\ZZ$ of the étale cohomology of $X$ supported at $x$ with coefficients in $\ZZ/p\ZZ$. It would be interesting to find some refinement of the Lyubeznik numbers also in this setting.
\end{enumerate}
\end{Remarks}

\section*{Examples}

\begin{enumerate}
\item\label{ex:1}
Let $M\subset\PP^{n-1}$ be a connected smooth projective variety of dimension $d-1\geq1$ and let $X\subset\CC^n$ be the cone over $M$. The exact sequence of (topological) local cohomology
\[
\cdots\to H^{s-1}(X) \to H^{s-1}(X\smallsetminus \{\oo\}) \to H^s_{\{\oo\}}(X)\to H^s(X) \to\cdots
\]
lifts as a sequence of MHS. Since $X$ is contractible, it induces an isomorphism of mixed Hodge structures $H^{s-1}(X\smallsetminus \{\oo\}) \cong H^s_{\{\oo\}}(X)$ if $s\geq2$, and $H^s_{\{\oo\}}(X)=0$ if $s\leq1$ since $M$ is connected. Furthermore, denoting by $L$ an ample class, the Thom-Gysin sequence in \cite[Proposition 1.3]{RSW}, which is a sequence of mixed Hodge structures, yields for any $s\geq2$ an exact sequence
\[
H^{s-3}(M)\red{(-1)} \xrightarrow{~L~} H^{s-1}(M)\to H^s_{\{\oo\}}(X) \to H^{s-2}(M)\red{(-1)} \xrightarrow{~L~} H^s(M).
\]
By the hard Lefschetz theorem, we identify $H^s_{\{\oo\}}(X)$ with
\begin{enumerate}
\item\label{ex:1a}
the $L$\nobreakdash-primitive part $H^{s-1}(M)_\mathrm{prim}$, which is pure of weight $\red{s-1}$, if~$s\leq d$ (and $s\geq2$),
\item
and, if $s\geq d+1$, i.e., $s=d+r$ with $r\geq1$, the \red{Tate twisted} $L$\nobreakdash-coprimitive part $H^{s-2}(M)_\mathrm{coprim}\red{(-1)}=\ker L:H^{s-2}(M)\red{(-1)}\to H^s(M)$, which is pure of weight $\red{s}$.
\end{enumerate}
The main theorem yields then 
\bgroup
\renewcommand{\theenumii}{\roman{enumii}}
\begin{enumerate}
\item\label{enum:ci}
$\lambda^{p,q}_{0,s}(\mathscr O_{X,0})=h^{\red{-p,-q}}(H^{s-1}(M)_\mathrm{prim})$ for $2\leq s \leq d-1$,
\item\label{enum:cii}
$\lambda^{p,q}_{r,d}(\mathscr O_{X,0})= h^{p+d\red{-1}, q+d\red{-1}}(H^{d+r-2}(M)_\mathrm{coprim})$
for $2 \leq r \leq d$,
\item\label{enum:ciii}
all other $\lambda^{p,q}_{r,s}(\mathscr O_{X,0})$ vanish.
\end{enumerate}
\egroup

\item
It is known that there exist singularities whose links have equal Betti numbers but different Hodge numbers (see e.g.\ \cite[\S 3]{SS}). In the case of an isolated singularity, Lyubeznik numbers are not enough to recover all Betti numbers of the link, therefore
not all Hodge numbers of the link can be obtained as Hodge-Lyubeznik numbers.
Nevertheless, it is not difficult to give examples of singularities with equal Lyubeznik numbers and distinct Hodge-Lyubeznik numbers:

In \cite{WYW}, the authors find a pair of three dimensional smooth projective complete intersections~$N_1$ and $N_2$ in $\mathbb P^9$ which are diffeomorphic but $h^{0,3}(N_1)\neq h^{0,3}(N_2)$ and $h^{1,2}(N_1)\neq h^{1,2}(N_2)$.\footnote{In fact, they find several pairs with this property.}

For $i=1,2$, let $M_i \subset \PP^{19}$ be the image of $N_i\times \PP^1$ by the Segre embedding, let $X_i \subset \mathbb A^{20}$ be the affine cone over $M_i$, and let $A_i$ denote the local ring at the origin of $X_i$.

Since, for $i=1,2$, the singularity of $X_i$ at zero is isolated, the Lyubeznik numbers of~$A_i$ depend only on the Betti numbers of $X_i\smallsetminus\{0\}$ (see \cite{GS}). Topologically, $X_i\smallsetminus\{0\}$ is a $\mathbb S^1$\nobreakdash-bundle over $M_i \cong N_i\times \PP^1$. It follows that $\lambda_{p,\ell}(A_1)=\lambda_{p,\ell}(A_2)$ for all $p, \ell\in\NN$.

On the other hand,  we apply the results of Example \eqref{ex:1} with $d=5$ and $s=d-1=4$. Thus
\[
\lambda_{0,4}^{-p,-q}(A_i)=h^{\red{p,q}}(H^3(M_i)_\mathrm{prim})=h^{\red{p,q}}(H^3(M_i))-h^{\red{p-1,q-1}}(H^1(M_i)).
\]
But we have $h^{p,q}(H^\ell(M_i))= h^{p,q}(H^\ell(N_i \times \mathbb P^1)) = h^{p,q}(H^\ell(N_i)) + h^{p-1,q-1}(H^{\ell-2}(N_i))$,
and so, $\lambda_{0,4}^{-p,-q}(A_i)=h^{\red{p,q}}(H^3(N_i))$. In particular, if $N_1=X_3 (70, 16, 16, 14, 7, 6)$ we obtain, according to \cite{WYW},
\[
\lambda^{\red{-1,-2}}_{0,4}(A_1)= h^{1,2}(H^3(N_1))=3365330286081,
\]
while if $N_2=X_3 (56, 49, 8, 6, 5, 4, 4)$,
\[
\lambda^{\red{-1,-2}}_{0,4}(A_2)= h^{1,2}(H^3(N_2))=3343868254721.
\]
\end{enumerate}

\pagebreak[2]
\begin{Remarks}\mbox{}
\renewcommand{\theenumi}{\alph{enumi}}
\begin{enumerate}
\item\label{rem:a}
In \cite[Theorem 6.4]{H-P21}, which takes place in the setting of Example \ref{ex:1} above, the authors show that the $\cD_{\CC^n}$-module $\HH_X^{n-d}(\OO_{\CC^n})$ has a maximal simple submodule with support~$X$ (this is the minimal extension of its restriction to $\CC^n\smallsetminus\{0\}$), and that the quotient is supported at the origin with multiplicity equal to $\dim H^{d-1}(M)_\mathrm{prim}$. We can recover this result, and upgrade it to an equality between Hodge numbers as follows. The first point comes from the exact sequence~\eqref{eq:exseq}. For the second point, we write the mixed Hodge module $\cQ$ corresponding to this quotient as $\cQ\simeq\DD\cK$, with $\cK$ defined by \eqref{eq:K}. Then \eqref{eq:HDH} with $s=d-n$ and $r=0$, together with the previous example \ref{ex:1}\eqref{ex:1a} with $s=d$, yields
\[
h^{p,q}\cK(-n) = h^{p,q}(H_{\{0\}}^d(X))=h^{\red{p,q}}(H^{d-1}(M)_\mathrm{prim}),
\]
so that $\cK(-n)$ is pure of weight $d-\red{1}$. It follows that $\cQ(n)$ is pure of weight $\red{1}-d$ and $h^{p,q}\cQ(n)=h^{\red{-p,-q}}(H^{d-1}(M)_\mathrm{prim})$.

\item
The calculation above does not make use of any special property of the varieties in \cite{WYW}, the proof can be transposed to other examples of
homeomorphic complex projective manifolds with different Hodge numbers (according to \cite{WYW} there are examples obtained by Xiao and Campana in the eighties, but we didn't get access to their papers).

\item
It is proved in \cite{Zh} that if $X$ is a projective variety over a field of positive characteristic, the Lyubeznik numbers of the local ring at the vertex of the affine cone over $X$ are independent of the embedding $X \hto \PP^n$.
In contrast, the examples in \cite{RSW} (see also irreducible examples in \cite{Wa}) show that this does not hold for varieties over the complex numbers. It~would be interesting to determine, in these examples, which Hodge-Lyubeznik numbers are involved in the dependence on the embedding $X \hto \PP^n$.
\end{enumerate}
\end{Remarks}

\subsection*{Acknowledgements}
We thank the referee for pointing us the reference \cite{H-P21} and suggesting Remark \eqref{rem:a} above. We thank Bradley Dirks for noticing a missing Tate twist in the exact sequence of Example \ref{ex:1} of the first version of this paper and a typo.



\end{document}